\newtheorem{propo}{{\bf Proposition}}[section]
\newtheorem{coro}[propo]{{\bf Corollary}}
\newtheorem{lemma}[propo]{{\bf Lemma}} \newtheorem{theor}[propo]{{\bf
Theorem}} \newtheorem{ex}{{\sc Example}}[section]
\newenvironment{proof}{{\bf Proof.}}{$\Box$}
\def\R{{\mathbb R}}
\def\N{{\mathbb N}}
\begin{document}

\vspace*{1.0in}

\begin{center} SUBALGEBRAS THAT COVER OR AVOID CHIEF FACTORS OF LIE ALGEBRAS  
\end{center}
\bigskip

\begin{center} DAVID A. TOWERS 
\end{center}
\bigskip

\begin{center} Department of Mathematics and Statistics

Lancaster University

Lancaster LA1 4YF

England

d.towers@lancaster.ac.uk 
\end{center}
\bigskip

\begin{abstract} We call a subalgebra $U$ of a Lie algebra $L$ a $CAP$-subalgebra of $L$ if for any chief factor $H/K$
of $L$, we have $H \cap U = K \cap U$ or $H+U = K+U$. In this paper we investigate some properties of such subalgebras and obtain some characterizations for a finite-dimensional Lie algebra $L$ to be solvable under the assumption that some of its maximal subalgebras or $2$-maximal subalgebras be $CAP$-subalgebras.
\medskip 

\noindent {\em Mathematics Subject Classification 2000}: 17B05, 17B30, 17B50.
\noindent {\em Key Words and Phrases}: Lie algebras, complemented, solvable, Frattini ideal, prefrattini subalgebra, residual. 
\end{abstract}

\section{The covering and avoidance property}
Throughout, $L$ will denote a finite-dimensional Lie algebra over a field $F$.
Let 
\begin{equation} 0 = A_0 \subset A_1 \subset \ldots \subset A_n = L  
\end{equation} be a chief series for $L$. The subalgebra $U$ {\em avoids} the factor algebra $A_i/A_{i-1}$ if $U \cap A_i = U \cap A_{i-1}$; likewise, $U$ {\em covers} $A_i/A_{i-1}$ if $U + A_i = U + A_{i-1}$. We say that $U$ has the covering and avoidance property of $L$ if $U$ either covers or avoids every chief factor of $L$. We also say that $U$ is a $CAP$-subalgebra of $L$. The corresponding concepts in group theory have been studied extensively and have proved useful in characterising finite solvable groups and some of their subgroups (see, for example, \cite{tom}, \cite{xs} and \cite{ld}). In Lie algebras, some parallel results have been obtained by a number of authors, and this paper is intended to be a further contribution to that work.

There are a number of ways in which $CAP$-subalgebras arise. We say that $A_i/A_{i-1}$ is a {\em Frattini} chief factor if $A_i/A_{i-1} \subseteq \phi(L/A_{i-1})$; it is {\em complemented} if there is a maximal subalgebra $M$ of $L$ such that $L = A_i + M$ and $A_i \cap M = A_{i-1}$. When $L$ is solvable it is easy to see that a chief factor is Frattini if and only if it is not complemented. For a subalgebra $B$ of $L$ we denote by $[B:L]$ the set of all subalgebras $S$ of $L$ with $B \subseteq S \subseteq L$, and by $[B:L]_{max}$ the set of maximal subalgebras in $[B:L]$; that is, the set of maximal subalgebras of $L$ containing $B$. We define the set $\mathcal{I}$ by $i \in \mathcal{I}$ if and only if $A_i/A_{i-1}$ is not a Frattini chief factor of $L$. For each $i \in \mathcal{I}$ put
\[ \mathcal{M}_i = \{ M \in [A_{i-1}, L]_{max} \colon A_i \not \subseteq M\}.
\]
Then $U$ is a {\em prefrattini} subalgebra of $L$ if 
\[ U = \bigcap_{i \in \mathcal{I}} M_i \hbox{ for some } M_i \in \mathcal{M}_i.
\]
It was shown in \cite{prefrat} that, when $L$ is solvable, this definition does not depend on the choice of chief series, and  that the prefrattini subalgebras of $L$ cover the Frattini chief factors and avoid the rest; that is, they are $CAP$-subalgebras of $L$.
\par

Further examples were given by Stitzinger in \cite{stit}, where he proved the following result (see \cite{stit} for definitions of the terminology used).

\begin{theor}(\cite[Theorem 2]{stit} Let ${\mathcal F}$ be a saturated formation of solvable Lie algebras, and let $U$ be an ${\mathcal F}$-normaliser of $L$. Then $U$ covers every ${\mathcal F}$-central chief factor of $L$ and avoids every ${\mathcal F}$-eccentric chief factor of $L$.
\end{theor}
\bigskip

The chief factor $A_i/A_{i-1}$ is called {\em central} if $[L,A_i] \subseteq A_{i-1}$ and {\em eccentric} otherwise. A particular case of the above result is the following theorem, due to Hallahan and Overbeck.

\begin{theor}\label{t:ho} (\cite[Theorem 1]{ho}) Let $L$ be a metanilpotent Lie algebra. Then $C$ is a Cartan subalgebra of $L$ if and only if it covers the central chief factors and avoids the eccentric ones.
\end{theor}
\bigskip

In group theory an important class of $CAP$-subgroups is given by the {\em normally embedded} (also called {\em strongly pronormal}) subgroups (see \cite[page 251]{dh}). In a sense, the natural analogue of this concept in Lie algebras is to call a subalgebra $U$ of $L$ {\em strongly pronormal} if every Cartan subalgebra of $U$ is also a Cartan subalgebra of $U^L$, the ideal closure of $U$ in $L$. Such subalgebras satisfy a number of the same properties as those of their group-theoretic counterparts. However, they are not necessarily $CAP$-subalgebras, even when $L$ is metabelian, as the following example shows. 
\begin{ex} Let $L$ be the four-dimensional real Lie algebra with basis $e_1$, $e_2$, $e_3$, $e_4$ and multiplication $[e_1,e_3]=e_1$, $[e_2,e_3]=e_2$, $[e_1,e_4]=-e_2$ and $[e_2,e_4]=e_1$, other products being zero. Then $A = \R e_1+ \R e_2$ is a minimal abelian ideal of $L$ and $U = \R e_1+ \R e_3$ is strongly pronormal in $L$ (since the Cartan subalgebras of $U$ are of the form $\R (\alpha e_1 + e_3)$ ($\alpha \in \R$) and these are also Cartan subalgebras of $U^{L} = \R e_1 + \R e_2 + \R e_3$). However, $U \cap A = \R e_1 \neq U \cap 0$ and $U+A = \R e_1 + \R e_2 + \R e_3 \neq U+0$, so $U$ is not a $CAP$-subalgebra of $L$.
\end{ex}
\bigskip

An alternative approach which does yield examples of $CAP$-subalgebras will be given in the next section.

\section{Elementary results}
In this section we collect together some properties of $CAP$-subalgebras and give characterisations of simple and of supersolvable Lie algebras in terms of them. If $U$ is a subalgebra of $L$, the {\em core} of $U$, denoted $U_L$, is the largest ideal of $L$ contained in $U$.

\begin{lemma}\label{l:pre} Let $B$ be a subalgebra of $L$ and $H/K$ a chief factor of $L$. Then
\begin{itemize}
\item[(i)] $B$ covers $H/K$ if and only if $B \cap H +K = H$; and
\item[(ii)] $B$ avoids $H/K$ if and only if $(K + B) \cap H = K$.
\item[(iii)] If $B \cap H+K$ is an ideal of $L$, then $B$ covers or avoids $H/K$. In particular, ideals are $CAP$-subalgebras.
\item[(iv)] The non-trivial Lie algebra $L$ is simple if and only if it has no non-trivial proper $CAP$-subalgebras.
\item[(v)] $B$ covers or avoids $H/K$ if and only if there exists an ideal $N$ with $N \subseteq B \cap K$ and $B/N$ covers or avoids $(H/N)/(K/N)$ respectively. Furthermore, $B$ is a $CAP$-subalgebra of $L$ if and only if there exists an ideal $N$ of $L$ such that $N \subseteq B$ and $B/N$ is a $CAP$-subalgebra of $L/N$.
\item[(vi)] Let $C$ be a subalgebra containing $B$. If $H/K$ is covered (respectively, avoided) by $B$, then so is $(H \cap C)/(K \cap C)$.
\end{itemize}
\end{lemma}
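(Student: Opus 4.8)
The plan is to derive all six parts from the modular law for subspaces, namely that $Z \subseteq Y$ implies $(X+Z)\cap Y = (X \cap Y) + Z$, and then to feed the first two characterisations forward into the rest. First I would record, using $K \subseteq H$, that covering $B + H = B + K$ is equivalent to $H \subseteq B + K$ and that avoidance $B \cap H = B \cap K$ is equivalent to $B \cap H \subseteq K$. Applying the modular law with $Z = K$ and $Y = H$ gives $(B+K)\cap H = (B \cap H) + K$, whence $H \subseteq B + K$ if and only if $(B \cap H) + K = H$; this is (i). The same identity shows $B \cap H \subseteq K$ if and only if $(K+B)\cap H = K$, which is (ii).

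For (iii) I would set $N = (B \cap H) + K$, so that $K \subseteq N \subseteq H$. If $N$ is an ideal of $L$, then minimality of the chief factor $H/K$ forces $N = K$ or $N = H$, and by (ii) and (i) these say precisely that $B$ avoids, respectively covers, $H/K$. When $B$ is itself an ideal, $B \cap H$ and hence $N$ is an ideal, which yields the final assertion. For (iv), if $L$ is simple its only chief factor is $L/0$, and covering forces $B = L$ while avoidance forces $B = 0$, so there is no proper non-trivial $CAP$-subalgebra; conversely a non-simple $L$ of dimension at least two has a proper nonzero ideal, which is a $CAP$-subalgebra by (iii).

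Part (v) carries the real content, and I expect its second equivalence to be the main obstacle. The first equivalence is routine: for an ideal $N$ with $N \subseteq B \cap K$, quotienting by $N$ sends $B + H = B + K$ to $(B+H)/N = (B+K)/N$ and, because $N \subseteq B \cap K \subseteq B \cap H$, sends $B \cap H = B \cap K$ to $(B\cap H)/N = (B\cap K)/N$; the existence direction is witnessed by $N = 0$. The substantive point is the backward direction of the second equivalence: given an ideal $N \subseteq B$ with $B/N$ a $CAP$-subalgebra of $L/N$, I must show $B$ covers or avoids an \emph{arbitrary} chief factor $H/K$ of $L$, even one not lying above $N$. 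I would fix $H/K$ and split on whether $H \subseteq K + N$. If it does, then $N \subseteq B$ gives $H \subseteq K + B$, so $B$ covers $H/K$ by (i). If it does not, then $(K+N)\cap H$ is an ideal of $L$ with $K \subseteq (K+N)\cap H \subseteq H$, so by minimality it is $K$ or $H$; since $H \not\subseteq K+N$ it cannot be $H$, hence it equals $K$ and in particular $N \cap H \subseteq K$. One then checks that $(H+N)/(K+N)$ is a chief factor of $L/N$, the key point being that $N$ acts trivially on it because $[N,H] \subseteq N \cap H \subseteq K$, so $B/N$ covers or avoids it. Transferring back, covering of $(H+N)/(K+N)$ by $B/N$ reads $B + H + N = B + K + N$, which collapses to $B + H = B + K$ since $N \subseteq B$; and avoidance reads $N + (B \cap H) = N + (B \cap K)$, which I would upgrade to $B \cap H \subseteq K$ using $N \cap H \subseteq K$, so that $B$ avoids $H/K$. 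The trivial-action check and this avoidance upgrade are the two places demanding genuine care.

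Finally, for (vi) I would use $B \subseteq C$, so that $B \cap C = B$. Then $B \cap (H \cap C) = B \cap H$ and $B \cap (K \cap C) = B \cap K$, and avoidance of $H/K$ passes immediately to $(H \cap C)/(K \cap C)$. For covering I invoke (i): $B$ covers $H/K$ means $H = (B \cap H) + K$, so for $x \in H \cap C$ I write $x = b + k$ with $b \in B \cap H$ and $k \in K$; since $b \in B \subseteq C$ and $x \in C$, the element $k = x - b$ lies in $K \cap C$, whence $x \in B + (K \cap C)$. Thus $H \cap C \subseteq B + (K \cap C)$ and $B$ covers $(H \cap C)/(K \cap C)$.
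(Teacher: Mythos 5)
Your proof is correct. For parts (i)--(iv) and (vi) it matches the paper's intent: the paper dismisses (i), (ii), (iv) and (vi) as straightforward (your modular-law derivations are exactly the omitted details), and it proves (iii) just as you do, by applying minimality of $H/K$ to the ideal $B\cap H+K$ and then quoting (i) and (ii). Where you genuinely go beyond the paper is in (v). The paper's recorded proof of (v) consists solely of the two quotient equivalences $B+H=B+K \Leftrightarrow B/N+H/N=B/N+K/N$ and $B\cap H=B\cap K \Leftrightarrow B/N\cap H/N=B/N\cap K/N$ for the ideal $N=(B\cap K)_L$; this establishes the first sentence of (v) but never addresses the ``Furthermore'' clause, whose backward direction you correctly single out as the substantive point: an ideal $N\subseteq B$ need not lie below the bottom term of an arbitrary chief factor $H/K$, so covering or avoidance cannot simply be read off in the quotient. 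Your argument --- split on whether $H\subseteq K+N$; if so, $N\subseteq B$ forces covering; if not, minimality gives $(K+N)\cap H=K$, hence $N\cap H\subseteq K$, and then $(H+N)/(K+N)$ is a chief factor of $L/N$ whose covering or avoidance by $B/N$ transfers back to $H/K$ --- is complete and correct, and it is precisely what the statement requires under the paper's own definition of a $CAP$-subalgebra (covering or avoiding \emph{every} chief factor, as in the abstract); under a fixed-chief-series definition one could instead refine a chief series through $N$, which is presumably why the paper felt free to omit this. One small misattribution on your side: the reason $(H+N)/(K+N)$ is a chief factor of $L/N$ is not that $N$ acts trivially on it (that is automatic for any factor of ideals containing $N$), but that your identity $(K+N)\cap H=K$ makes it $L$-isomorphic to $H/K$ by the second isomorphism theorem, hence irreducible; since you have that identity in hand, this is a cosmetic slip rather than a gap.
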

\begin{proof} (i), (ii) These are straightforward.
\par

\noindent (iii) Since $B \cap H+K$ is an ideal of $L$, we have that  $B\cap H+K = H$ or  $B\cap H+K = K$. The former implies that $B$ covers $H/K$, by (i); the latter yields that $(K + B) \cap H =(B\cap H)+K = K$, whence $B$ avoids $H/K$, by (ii).
\par

\noindent (iv) This is straightforward.
\par

\noindent (v) Let $N = (B \cap K)_L$. Then
$$ B+H = B+K \Leftrightarrow \frac{B}{N} + \frac{H}{N} = \frac{B}{N} + \frac{K}{N}, \hbox{ and}$$
$$B \cap H = B \cap K \Leftrightarrow \frac{B}{N} \cap \frac{H}{N} = \frac{B}{N} \cap \frac{K}{N}. $$
\par

\noindent (vi) This is straightforward.
\end{proof}
\bigskip

A subalgebra $U$ of $L$ will be called {\em ideally embedded} in $L$ if $I_L(U)$ contains a Cartan subalgebra of $L$, where $I_L(U) = \{ x \in L : [x,U] \subseteq U \}$ is the {\em idealiser} of $U$ in $L$ . Clearly, any subalgebra containing a Cartan subalgebra of $L$ and any ideal of $L$ is ideally embedded in $L$. Then we have the following extension of Theorem \ref{t:ho}.

\begin{theor}\label{t:ie} Let $L$ be a metanilpotent Lie algebra and let $U$ be ideally embedded in $L$. Then $U$ is a $CAP$-subalgebra of $L$.
\end{theor}
\begin{proof} Let $C \subseteq I_L(U)$ be a Cartan subalgebra of $L$ and let $N$ be the nilradical of $L$. Then $(C+N)/N$ is a Cartan subalgebra of $L$ and $L/N$ is nilpotent, so $L=C+N$. Let $H/K$ be a chief factor of $L$. Then $[N,H]\subseteq K$ so $U\cap H+K$ is an ideal of $L$. The result now follows from Lemma \ref{l:pre} (iii)
\end{proof}
\bigskip

We define the {\em nilpotent residual}, $\gamma_{\infty}(L)$, of $L$ to be the smallest ideal of $L$ such that $L/\gamma_{\infty}(L)$ is nilpotent. Clearly this is the intersection of the terms of the lower central series for $L$. Then the {\em lower nilpotent series} for $L$ is the sequence of ideals $N_i(L)$ of $L$ defined by $N_0(L) = L$, $N_{i+1}(L) = \gamma_{\infty}(N_i(L))$ for $i \geq 0$. Then we have the following extension of Theorem \ref{t:ie}.

\begin{coro}\label{c:iesolv} Let $L$ be any solvable Lie algebra and let $U$ be an ideally embedded subalgebra of $L$ with $K = N_2(L) \subseteq U$. Then $U$ is a $CAP$-subalgebra of $L$.
\end{coro}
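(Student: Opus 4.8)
The plan is to reduce to the metanilpotent situation already settled in Theorem \ref{t:ie} by factoring out the ideal $K = N_2(L)$. The first step is to check that $K$ really is an ideal of $L$ and that $L/K$ is metanilpotent. Writing $N_1 = N_1(L) = \gamma_{\infty}(L)$, the definition of the nilpotent residual gives that $(L/K)/(N_1/K) \cong L/N_1$ is nilpotent, while $N_1/K = N_1/\gamma_{\infty}(N_1)$ is likewise nilpotent; thus $N_1/K$ is a nilpotent ideal of $L/K$ with nilpotent quotient, so $L/K$ is metanilpotent.

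Since $K \subseteq U$ and $K$ is an ideal of $L$, the ``furthermore'' part of Lemma \ref{l:pre}(v) reduces the claim to showing that $U/K$ is a $CAP$-subalgebra of $L/K$. Because $L/K$ is metanilpotent, Theorem \ref{t:ie} tells us it is enough to verify that $U/K$ is ideally embedded in $L/K$, that is, that $I_{L/K}(U/K)$ contains a Cartan subalgebra of $L/K$.

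To establish this, let $C$ be a Cartan subalgebra of $L$ with $C \subseteq I_L(U)$, which exists since $U$ is ideally embedded. As $K$ is an ideal of $L$ contained in $U$, we have $[K,U] \subseteq K \subseteq U$, so $K \subseteq I_L(U)$; and a direct check using $[x+K,u+K] = [x,u]+K$ shows $I_L(U)/K \subseteq I_{L/K}(U/K)$. Hence $(C+K)/K \subseteq I_L(U)/K \subseteq I_{L/K}(U/K)$. Finally, the image of a Cartan subalgebra under the canonical surjection $L \to L/K$ is again a Cartan subalgebra, so $(C+K)/K$ is a Cartan subalgebra of $L/K$ lying inside $I_{L/K}(U/K)$. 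Thus $U/K$ is ideally embedded in the metanilpotent algebra $L/K$, and the corollary follows. The only points needing care are the behaviour of the idealiser and of Cartan subalgebras under the quotient by $K$; I expect no genuine obstacle here, since the whole argument turns on the single observation that $K$ lies inside both $U$ and $I_L(U)$, which is what lets the reduction modulo $K$ preserve ideal embeddedness.
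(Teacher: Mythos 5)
Your proof is correct and follows essentially the same route as the paper: pass to $L/K$, observe that it is metanilpotent and that $U/K$ remains ideally embedded via $(C+K)/K \subseteq (I_L(U)+K)/K \subseteq I_{L/K}(U/K)$, apply Theorem \ref{t:ie}, and pull back with Lemma \ref{l:pre}(v). You merely spell out two steps the paper leaves implicit (the metanilpotency of $L/K$ and the idealiser inclusion), which is fine.
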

\begin{proof} Let $C \subseteq I_L(U)$ be a Cartan subalgebra of $L$. Then $(C+K)/K$ is a Cartan subalgebra of $L/K$, and $I_{L/K}(U/K) \supseteq (I_L(U)+K)/K \supseteq (C+K)/K$, so $U/K$ is ideally embedded in $L/K$. Moreover, $L/K$ is metanilpotent. It follows from Theorem \ref{t:ie} that $U/K$ is a $CAP$-subalgebra of $L/K$. But now Lemma \ref{l:pre} (v) yields that $U$ is a $CAP$-subalgebra of $L$.
\end{proof}
\bigskip

 Let $U$ be a subalgebra of $L$ and $B$ an ideal of $L$. Then $U$ is said to be a {\em supplement} to $B$ in $L$ if $L=U+B$. Another set of examples of $CAP$-subalgebras, which don't require $L$ to be solvable, is given by the next result.

\begin{theor}Let $L$ be any Lie algebra, let $U$ be a supplement to an ideal $B$ in $L$, and suppose that $B^k \subseteq U$ for some $k \in \N$. Then $U$ is a $CAP$-subalgebra of $L$.
\end{theor}
\begin{proof} Let $L = B + U$ and let $H/K$ be a chief factor of $L$. Then  $K + [B,H] = H$ or $K$. Suppose first that $K + [B,H] = H$. Then $[B,H] \subseteq K + [B,[B,H]]$ and a simple induction argument shows that $H \subseteq K + B^k$ for all $k \geq 1$. Hence $H \subseteq K+U$, which yields that $H+U=K+U$.

So suppose now that $K + [B,H] = K$, whence $[B,H] \subseteq K$. Then $K + U \cap H$ is an ideal of $L$, and the result now follows from Lemma \ref{l:pre} (iii).
\end{proof}

\begin{lemma}\label{l:ideal} Let $U$ be a $CAP$-subalgebra of $L$ and let $B$ be an ideal of $L$. Then $B+U$ is a $CAP$-subalgebra of $L$.
\end{lemma}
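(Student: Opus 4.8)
The plan is to fix an arbitrary chief factor $H/K$ of $L$ and show that $V := B+U$ covers or avoids it; since $H/K$ is arbitrary this yields the conclusion. (Note that $V$ is indeed a subalgebra, as $B$ is an ideal.) The starting observation is that $B$, being an ideal, is itself a $CAP$-subalgebra by Lemma \ref{l:pre}(iii), so $B\cap H+K$ equals $H$ or $K$, and I would split into these two cases.

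If $B\cap H+K=H$, then $B$ covers $H/K$, i.e. $B+H=B+K$, and then $V+H=U+(B+H)=U+(B+K)=V+K$, so $V$ covers $H/K$; this case is immediate. The substance lies in the case $B\cap H+K=K$, i.e. $B\cap H\subseteq K$, where $B$ avoids $H/K$.

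The key idea there is to manufacture a new chief factor to which the $CAP$-property of $U$ can be applied. I would consider the ideals $B+K\subseteq B+H$ and check that $(B+H)/(B+K)$ is again a chief factor of $L$: by the modular law $H\cap(B+K)=K+(B\cap H)=K$, so $(B+H)/(B+K)\cong H/(H\cap(B+K))=H/K$ as $L$-modules, whence $(B+H)/(B+K)$ is $L$-irreducible (and non-trivial, as $H\neq K$). Now $U$ covers or avoids this factor. If $U$ covers $(B+H)/(B+K)$, then $V+H=U+(B+H)=U+(B+K)=V+K$, so $V$ covers $H/K$. If $U$ avoids it, then Lemma \ref{l:pre}(ii) gives $(K+V)\cap(B+H)=(B+K+U)\cap(B+H)=B+K$; intersecting with $H$ and using $H\subseteq B+H$ together with the modular law yields $(K+V)\cap H=(B+K)\cap H=K+(B\cap H)=K$, so $V$ avoids $H/K$ by Lemma \ref{l:pre}(ii).

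The main obstacle is the central step: recognising that $(B+H)/(B+K)$ is a bona fide chief factor of $L$, so that the hypothesis on $U$ is applicable there. (A tempting alternative, passing to $L/K$ and working with a minimal ideal, is awkward because $K$ need not lie in $U$, so $U$ would not obviously inherit the $CAP$-property in the quotient.) Once the right auxiliary factor is identified, the remaining verifications are routine applications of the modular law and of parts (i) and (ii) of Lemma \ref{l:pre}.
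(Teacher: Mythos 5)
Your proof is correct and follows essentially the same route as the paper: both hinge on recognising $(B+H)/(B+K)$ as a chief factor of $L$ (isomorphic to $H/K$) and then applying the $CAP$-property of $U$ to that auxiliary factor. The only difference is cosmetic --- in the final avoidance case the paper runs an element-wise computation ($x=b+u$, etc.), whereas you phrase the same computation via Lemma \ref{l:pre}(ii) and the modular law.
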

\begin{proof} Let $H/K$ be a chief factor of $L$. If $U+H=U+K$ then $B+U+H=B+U+K$, so suppose that $U \cap H = U \cap K$. Similarly, since $B$ is a $CAP$-subalgebra, by Lemma \ref{l:pre} (iii), we can suppose that $B \cap H = B \cap K$. Then
$$ \frac{B+H}{B+K} \cong \frac{(B+H)/B}{(B+K)/B} \cong \frac{H/B \cap H}{K/B \cap K} \cong \frac{H}{K}$$
is a chief factor of $L$. If $U+B+H=U+B+K$ the result is clear, so suppose that $U \cap (B+H) = U \cap (B+K)$.
\par

Let $x \in (B+U) \cap H$. Then $x = b+u$ for some $b \in B$, $u \in U$, and $x \in H$. It follows that $u \in (B+H) \cap U = (B+K) \cap U$, so that $x \in (B+K) \cap H = K + B \cap H = K$. Thus $(B+U) \cap H \subseteq (B+U) \cap K$. But the reverse inclusion is clear and the result follows.
\end{proof}
\bigskip

The next result gives the dimension of $CAP$-subalgebras in terms of the chief factors that they cover.

\begin{lemma} Let $U$ be a $CAP$-subalgebra of $L$, let $ 0=A_0 < A_1 < \ldots < A_n = L$ be a chief series for $L$ and let ${\mathcal I} = \{i : 1 \leq i \leq n,  U \hbox{ covers } A_i/A_{i-1} \}$. Then $ \dim U = \sum_{i \in {\mathcal I}} (\dim A_i - \dim A_{i-1})$.
\end{lemma}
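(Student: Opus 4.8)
The plan is to intersect the given chief series with $U$ and compute $\dim U$ by telescoping along the resulting chain. First I would form the ascending chain
$$0 = U \cap A_0 \subseteq U \cap A_1 \subseteq \cdots \subseteq U \cap A_n = U,$$
so that, summing the successive dimension jumps,
$$\dim U = \sum_{i=1}^{n} \bigl( \dim(U \cap A_i) - \dim(U \cap A_{i-1}) \bigr).$$
The whole result then reduces to identifying each summand: I claim it equals $\dim A_i - \dim A_{i-1}$ when $U$ covers $A_i/A_{i-1}$ and equals $0$ when $U$ avoids it.

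The avoidance case is immediate from the definition, since $U \cap A_i = U \cap A_{i-1}$ forces the corresponding jump to be $0$. For the covering case the key step is the isomorphism
$$\frac{U \cap A_i}{U \cap A_{i-1}} = \frac{U \cap A_i}{(U \cap A_i) \cap A_{i-1}} \cong \frac{(U \cap A_i) + A_{i-1}}{A_{i-1}},$$
which holds because $U \cap A_{i-1} = (U \cap A_i) \cap A_{i-1}$. Now Lemma \ref{l:pre}(i) tells me that $U$ covers $A_i/A_{i-1}$ precisely when $(U \cap A_i) + A_{i-1} = A_i$, so the right-hand side is $A_i/A_{i-1}$ and the jump equals $\dim A_i - \dim A_{i-1}$, as required.

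Finally I would observe that, since every chief factor is nontrivial, a single factor cannot be simultaneously covered and avoided: comparing dimensions via $\dim(U + A_i) + \dim(U \cap A_i) = \dim U + \dim A_i$ (and the analogue for $i-1$) would otherwise force $\dim A_i = \dim A_{i-1}$. Hence, as $U$ is a $CAP$-subalgebra, exactly one alternative holds for each $i$, and the two cases above account for every index. Substituting back into the telescoped sum collapses it to $\sum_{i \in \mathcal{I}} (\dim A_i - \dim A_{i-1})$. I expect the only delicate point to be the covering computation, namely correctly invoking Lemma \ref{l:pre}(i) to pin down $(U \cap A_i) + A_{i-1}$; everything else is routine dimension bookkeeping.
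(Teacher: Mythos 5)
Your proof is correct, but it takes a genuinely different route from the paper. The paper argues by induction on the length of the chief series: it passes to $L/A_1$, invokes Lemma \ref{l:ideal} together with Lemma \ref{l:pre}~(v) to see that $(U+A_1)/A_1$ is a $CAP$-subalgebra of $L/A_1$, applies the inductive hypothesis there, and then handles the bottom factor $A_1/A_0$ by the two cases cover/avoid. Your argument instead telescopes $\dim U$ along the chain $U \cap A_0 \subseteq U \cap A_1 \subseteq \cdots \subseteq U \cap A_n$ and computes each jump directly: zero in the avoidance case, and $\dim A_i - \dim A_{i-1}$ in the covering case via the second isomorphism theorem $(U \cap A_i)/(U \cap A_{i-1}) \cong \bigl((U \cap A_i) + A_{i-1}\bigr)/A_{i-1}$ together with Lemma \ref{l:pre}~(i). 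This is more elementary and self-contained: it needs no induction and no appeal to Lemma \ref{l:ideal}, and it also sidesteps a point the paper's induction leaves implicit, namely that the set of covered indices is preserved when one passes from $U$ to $(U+A_1)/A_1$ in $L/A_1$ (justifying this carefully requires precisely the cover/avoid exclusivity that you spell out with the dimension identity $\dim(U+A_i) + \dim(U \cap A_i) = \dim U + \dim A_i$). What the paper's approach buys in exchange is a demonstration of how $CAP$-subalgebras behave under quotients, reusing the structural lemmas already established; your final paragraph on mutual exclusivity, while a good sanity check, is not strictly needed, since for $i \in \mathcal{I}$ the covering computation applies and for $i \notin \mathcal{I}$ the $CAP$ property forces avoidance, and these index sets are disjoint by definition.
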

\begin{proof} We use induction on $n$. The result is clear if $n=1$. So suppose it holds for all Lie algebras with chief series of length $< n$, and let $L$ have a chief series of length $n$. Then $U+A_1/A_1$ is a $CAP$-subalgebra of $L/A_1$, by Lemmas \ref{l:ideal} and \ref{l:pre} (v). Moreover, 
$$ \dim (U+A_1/A_1) = \sum_{i \in {\mathcal I}, i \neq 1} (\dim A_i - \dim A_{i-1}),$$
by the inductive hypothesis. If $U$ covers $A_1/A_0$, then 
$$ \dim U = \dim (U+A_1) =  \dim (U+A_1/A_1) + \dim A_1 = \sum_{i \in {\mathcal I}} (\dim A_i - \dim A_{i-1}).$$
If $U$ avoids $A_1/A_0$ then
$$ \dim U = \dim (U/U \cap A_1) =  \dim (U+A_1/A_1) = \sum_{i \in {\mathcal I}} (\dim A_i - \dim A_{i-1}).$$ 
\end{proof}
\bigskip

Finally in this section we consider supersolvable Lie algebras, that is, Lie algebras all of whose chief factors are one-dimensional.

\begin{propo} Let $H/K$ be a chief factor of $L$. Then every one-dimensional subalgebra of $L$ covers or avoids $H/K$ if and only if $\dim (H/K) = 1$.
\end{propo}
\begin{proof} If $x \in K$ then $Fx = Fx \cap H = Fx \cap K$, so $Fx$ avoids $H/K$. If $x \notin H$ then $0 = Fx \cap H = Fx \cap K$, so again $Fx$ avoids $H/K$. If $x \in H \setminus K$ then $Fx$ does not avoid $H/K$, and $Fx$ covers $H/K$ if and only if $H = K + Fx$, whence the result.
\end{proof}

\begin{coro} Every one-dimensional subalgebra of $L$ is a $CAP$-subalgebra of $L$ if and only if $L$ is supersolvable.
\end{coro}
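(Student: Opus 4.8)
The plan is to deduce this directly from the preceding Proposition, by quantifying over all chief factors simultaneously. First I would record the trivial but essential observation that every one-dimensional subspace $Fx$ of $L$ is automatically a subalgebra, since $[x,x]=0$; thus the one-dimensional subalgebras of $L$ are exactly the lines $Fx$ with $0 \neq x \in L$, and there is no distinction between ``one-dimensional subspace'' and ``one-dimensional subalgebra'' to worry about.

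For the reverse implication, suppose $L$ is supersolvable. I would fix a chief series $0 = A_0 < A_1 < \cdots < A_n = L$; by definition every factor $A_i/A_{i-1}$ satisfies $\dim (A_i/A_{i-1}) = 1$. The Proposition then applies to each $i$ and tells us that every one-dimensional subalgebra covers or avoids $A_i/A_{i-1}$. Since this holds for each factor of the series, every one-dimensional subalgebra covers or avoids every chief factor, and hence is a $CAP$-subalgebra of $L$.

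For the forward implication, suppose every one-dimensional subalgebra is a $CAP$-subalgebra. Taking any chief factor $H/K$ of $L$, the hypothesis guarantees that every one-dimensional subalgebra covers or avoids $H/K$; the Proposition then forces $\dim (H/K) = 1$. As $H/K$ was an arbitrary chief factor, all chief factors of $L$ are one-dimensional, which is precisely the statement that $L$ is supersolvable.

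The only point requiring a little care is the logical order of quantifiers: the assertion ``every one-dimensional subalgebra is a $CAP$-subalgebra'' is quantified over subalgebras first and chief factors second, whereas the Proposition is phrased for one fixed chief factor with the subalgebra quantifier inside. Since these universal quantifiers commute, I may apply the Proposition factor-by-factor in both directions, so I expect no genuine obstacle beyond this bookkeeping; the corollary is essentially a repackaging of the Proposition across a whole chief series.
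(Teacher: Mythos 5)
Your proposal is correct and takes essentially the same route as the paper, which states this corollary without a separate proof precisely because it is an immediate consequence of the preceding Proposition. One small tidy-up: since the paper defines supersolvability as \emph{every} chief factor being one-dimensional, you can apply the Proposition directly to an arbitrary chief factor $H/K$ in both directions and skip the fixed chief series entirely --- as written, your step from ``covers or avoids each factor of this one series'' to ``covers or avoids every chief factor'' is not a valid inference in general, and it is unnecessary here.
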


\begin{propo} If $L$ is supersolvable then every subalgebra of $L$ is a $CAP$-subalgebra.
\end{propo}
\begin{proof} Let $U$ be a subalgebra of $L$ and let $H/K$ be a chief factor of $L$. Suppose first that $U \cap H \subseteq K$. Then $U \cap H \subseteq U \cap K \subseteq U \cap H$, whence $U \cap H = U \cap K$. So suppose now that  $U \cap H \not \subseteq K$. Then,  since $\dim (H/K) =1$, we have that $H = K+ U \cap H$, whence $H+U = K+U$.      
\end{proof}

\section{Some characterisations of solvable algebras}
In this section we are seeking characterisations of solvable Lie algebras in terms of $CAP$-subalgebras. The results are analogues of those for groups as obtained in \cite[Section 3]{xs}, but the proofs are different. A subalgebra $U$ of a Lie algebra $L$ is called a {\em c-ideal} of $L$ if there is an ideal $C$ of $L$ such that $L = U + C$ and $U \cap C \leq U_L$; c-ideals were introduced in \cite{cideal}.  First we need the following result.

\begin{propo}\label{p:solv}
Let $L$ be a Lie algebra over a field $F$ which has characteristic zero, or is algebraically closed and of characteristic greater than $5$, with minimal ideal $A$ and maximal subalgebra $M$. If $M$ is solvable and $M \cap A = 0$ then $L$ is solvable.
\end{propo}
\begin{proof} Clearly $L = M \oplus A$. But now $M$ is a c-ideal of $L$ and it follows from \cite[Theorems 3.2 and 3.3]{cideal} that $L$ is solvable, a contradiction again. 
\end{proof}

\begin{coro}\label{c:solv}  Let $L$ be a Lie algebra over a field $F$ which has characteristic zero, or is algebraically closed field and of characteristic greater than $5$. Then $L$ is solvable if and only if there is a maximal subalgebra $M$ of $L$ such that $M$ is a solvable $CAP$-subalgebra of $L$.
\end{coro}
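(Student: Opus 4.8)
The plan is to prove each implication separately, the forward (sufficiency) direction being the substantive one.

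For the ``only if'' part, suppose $L$ is solvable and nonzero. Then $[L,L] \neq L$, so $L/[L,L]$ is a nonzero abelian algebra and I can choose a subspace $M$ of codimension one with $[L,L] \subseteq M$. Since $[L,M] \subseteq [L,L] \subseteq M$, the subspace $M$ is in fact an ideal of $L$, and being of codimension one it is maximal. It is solvable as a subalgebra of a solvable algebra, and it is a $CAP$-subalgebra by Lemma \ref{l:pre}(iii). This produces the required maximal subalgebra.

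For the ``if'' direction, suppose $M$ is a maximal subalgebra of $L$ that is a solvable $CAP$-subalgebra; I will argue by reduction to the core-free case. First I claim we may assume $M_L = 0$. Indeed, $M_L \subseteq M$ is solvable, so it suffices to prove $L/M_L$ is solvable. In $L/M_L$ the image $M/M_L$ is maximal and solvable; it is core-free, because any ideal of $L/M_L$ lying inside $M/M_L$ pulls back to an ideal of $L$ inside $M$, hence inside $M_L$; and it is a $CAP$-subalgebra of $L/M_L$ by Lemma \ref{l:pre}(v), since every chief factor of $L/M_L$ has the form $(H/M_L)/(K/M_L)$ with $M_L \subseteq M \cap K$, and $M$ covers or avoids $H/K$. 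Replacing $L$ by $L/M_L$, I may therefore assume that $M$ is core-free.

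Now let $A$ be a minimal ideal of $L$. Since $M$ is core-free, $A \not\subseteq M$, for otherwise $A \subseteq M_L = 0$. Regarding $A$ as the chief factor $A/0$, the $CAP$ property forces $M$ either to cover it, giving $M + A = M + 0 = M$ and hence $A \subseteq M$ --- impossible --- or to avoid it, giving $M \cap A = M \cap 0 = 0$. Hence $M \cap A = 0$, and Proposition \ref{p:solv}, whose hypotheses on $F$ are exactly those of the corollary, yields that $L$ is solvable. The routine points (that a codimension-one subspace containing the derived algebra is a maximal ideal, and that maximality gives $M + A = L$) are immediate; the step requiring care is the reduction, namely checking that maximality, solvability, and the $CAP$ property are all inherited by $M/M_L$ in $L/M_L$, the last resting on Lemma \ref{l:pre}(v). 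Once the problem is reduced to a core-free $M$, the conclusion is a direct application of Proposition \ref{p:solv}, which is precisely where the field hypothesis enters.
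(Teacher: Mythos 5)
Your proof is correct and takes essentially the same route as the paper's: reduce to the core-free case $M_L=0$ (the paper packages this as a minimal-counterexample argument, you as a direct passage to $L/M_L$, but the content --- Lemma \ref{l:pre}(v) plus the fact that an extension of a solvable algebra by a solvable algebra is solvable --- is identical), then observe that the $CAP$ property forces $M \cap A = 0$ for a minimal ideal $A$, and conclude with Proposition \ref{p:solv}. The only other difference is cosmetic: for the easy direction the paper asserts that every maximal subalgebra of a solvable algebra is a $CAP$-subalgebra, whereas you exhibit a single codimension-one ideal containing $[L,L]$ and invoke Lemma \ref{l:pre}(iii), which certainly suffices.
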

\begin{proof} If $L$ is solvable it is easy to see that every maximal subalgebra of $L$ is a $CAP$-subalgebra of $L$. So suppose now that $L$ is the smallest non-solvable Lie algebra which has a solvable maximal subalgebra $M$ that is a $CAP$-subalgebra of $L$. If $M_L \neq 0$ then $L/M_L$ must be solvable, whence $L$ is solvable, a contradiction. Hence $M_L = 0$. Now $L$ cannot be simple, by Lemma \ref{l:pre} (iv), so let $A$ be a minimal ideal of $L$ with $A \not \subseteq M$. Since $M$ is a $CAP$-subalgebra we have $M \cap A = 0$. But then $L$ is solvable, by Proposition \ref{p:solv}, a contradiction.
\end{proof}
\bigskip

The Lie algebra $L$ is called {\em monolithic} with {\em monolith} $A$ if $A$ is the unique minimal ideal of $L$. We denote by $\phi(L)$ the Frattini ideal of $L$. If all of the maximal subalgebras of $L$ are $CAP$-subalgebras of $L$ we can deduce solvability without any restrictions on the field $F$.

\begin{theor}\label{t:max}  Let $L$ be a Lie algebra over any field $F$. Then $L$ is solvable if and only if all of its maximal subalgebras are $CAP$-subalgebras.
\end{theor}
\begin{proof}  If $L$ is solvable it is easy to see that every maximal subalgebra of $L$ is a $CAP$-subalgebra of $L$. So suppose that $L$ is the smallest non-solvable Lie algebra all of whose maximal subalgebras are $CAP$-subalgebras. Then $L$ is not simple, by Lemma \ref{l:pre} (iv), so let $A$ be a minimal ideal of $L$. By the minimality of $L$, $L/A$ is solvable. If $L$ has two different minimal ideals $A_1$ and $A_2$, then $L/A_1$, $L/A_2$ and hence $L \cong L/(A_1 \cap A_2)$ is solvable. It follows that $L$ is monolithic with monolith $A$.
\par

Let $M$ be any maximal subalgebra of $L$. Since $M$ is a $CAP$-subalgebra of $L$ we have that either $M+A =M$, whence $A \subseteq M$, or $M \cap A = 0$. If the former holds for every maximal subalgebra $M$, then $A \subseteq \phi(L)$, whence $A$ is abelian and $L$ is solvable. Thus, the latter must hold for some maximal subalgebra $K$. But, for any such maximal subalgebra $K$, $L = K \oplus A$ and $K \cong L/A$ is a solvable c-ideal of $L$. Moreover, if $M$ is a maximal subalgebra of $L$ with $A \subseteq M$, then $M/A$ is a maximal subalgebra of $L/A$ and so is a c-ideal of $L/A$, by \cite[Theorem 3.1]{cideal}. It follows that $M$ is a c-ideal of $L$, by \cite[Lemma 2.1]{cideal}. Hence $L$ is solvable, by \cite[Theorem 3.1]{cideal}. This contradiction establishes the result. 
\end{proof}
\bigskip

Let $M$ be a maximal subalgebra of $L$ and let $K$ be a maximal subalgebra of $M$. Then we call $K$ a {\em $2$-maximal} subalgebra of $L$. Next we consider Lie algebras in which every $2$-maximal subalgebra is a $CAP$-subalgebra of $L$. If $x \in L$ we put $C_L(x) = \{ y \in L : [y,x] = 0 \}$, the {\em centraliser} of $x$ in $L$. We say that $L$ has the {\em one-and-a-half generation property} if, given any $x \in L$, there exists $y \in L$ such that the subalgebra generated by $x$ and $y$, $\langle x, y \rangle$, is $L$. First we need the following result concerning simple Lie algebras with a one-dimensional maximal subalgebra.

\begin{theor}\label{t:gen} Let $L$ be a simple Lie algebra over a perfect field $F$ of characteristic zero or $p>3$. Then $L$ has a one-dimensional maximal subalgebra if and only if $L$ is three-dimensional simple and $\sqrt{F} \not \subseteq F$. 
\end{theor}
\begin{proof} Suppose that $L$ has a one-dimensional maximal subalgebra $Fx$. Clearly $L$ has rank one and $Fx$ is a Cartan subalgebra of $L$. Let $\Gamma$ denote the centroid of $L$. Since $\Gamma x$ is an abelian subalgebra of $L$, we have that $\Gamma x < C_L(x) = Fx$. So $\Gamma = F$, and $L$ is central-simple. Suppose that $\dim L > 3$. It follows from \cite{bo} that $L$ is a form of an Albert-Zassenhaus algebra. Moreover, $L$ has the one-and-a-half generation property. For, given any $y \in L$, either $y=\alpha x$ for some $\alpha \in F$, in which case $\langle y, z \rangle = L$ for any $z \not \in Fx$, or else $y \not \in Fx$, and then $\langle y,x \rangle = L$. Thus, $L$ is a form of a Zassenhaus algebra, by \cite{bois}.
\par

Let $K$ be a splitting field for the minimal polynomial of ad\,$x$ over $F$, and let $G$ be the Galois group of $K$ over $F$. Let $\sigma \in G$. Then $\sigma' = 1 \otimes \sigma$ is a Lie automorphism of $L \otimes_F K = L_K$. As $K$ is a Galois extension of $F$, an element of $L_K$ lies in $L$ if and only if it is fixed by $\sigma'$ for every
$\sigma \in G$. Now $L_K$ has a unique maximal subalgebra $M$ containing $Kx$ of codimension one in $L_K$ and $\sigma'$ must fix $M$. It follows that $(M\cap L)_K=M$ (see \cite[p. 54]{borel}) and so $M\cap L$ is a subalgebra of $L$ of codimension one in $L$. We must have $M\cap L=Fx$, which is impossible. Hence $L$ is three-dimensional simple and, as is well known, has a one-dimensional maximal subalgebra if and only if $\sqrt{F} \not \subseteq F$.
\par

The converse is easy.
\end{proof}

\begin{theor}  Let $L$ be a Lie algebra over any field $F$, in which every $2$-maximal subalgebra of $L$ is a $CAP$-subalgebra. Then either
\begin{itemize}
\item[(i)] $L$ is solvable, or 
\item[(ii)] $L$ is simple and every maximal subalgebra of $L$ is one-dimensional; in particular, if $F$ is perfect and of characteristic zero or $p > 3$, $L$ is three-dimensional simple and $\sqrt{F} \not \subseteq F$. 
\end{itemize}
\end{theor}
\begin{proof} Suppose first that $L$ is simple. Then every $2$-maximal is $0$ and so every maximal subalgebra of $L$ is one-dimensional, which is case (ii). So let $A$ be a minimal ideal of $L$. Suppose first that $A$ is a maximal subalgebra of $L$. Then every maximal subalgebra of $A$ is a $2$-maximal subalgebra of $L$ and so is a $CAP$-subalgebra of $L$. It follows that every maximal subalgebra of $A$ is $0$ and hence that $\dim A=1$. Also, by the maximality of $A$, $\dim (L/A) = 1$ and $L$ is solvable. 
\par

So now assume that $A$ is not a maximal subalgebra of $L$ and that $L$ is a minimal counter-example. Suppose first that $L/A$ is as in (ii). Let $Fx+A$ be a maximal subalgebra of $L$ and let $K$ be a $2$-maximal subalgebra of $L$ with $Fx \subseteq K \subset Fx+A$. Clearly $A \not \subseteq K$, so $K \cap A=0$, since $K$ is a $CAP$-subalgebra of $L$. Now $L/A$ is a chief factor of $L$ and $K \neq 0$, so $L=K \oplus A=Fx+A$, a contradiction.
\par

Thus $L/A$ is solvable and $L$ is monolithic, as in Theorem \ref{t:max}. If $A \subseteq \phi(L)$ then $A$ is solvable and hence so is $L$. Thus, $\phi(L) = 0$ and $L = M+A$ for some maximal subalgebra $M$ of $L$. Suppose that $M \cap A \neq 0$. Let $K$ be a maximal subalgebra of $M$ with $M \cap A \subseteq K$. Then $K$ is a $2$-maximal subalgebra of $L$ and so either $K+A=K$, yielding $A \subseteq K \subseteq M$, or $M \cap A \subseteq K \cap A =0$, both of which are contradictions. It follows that $M \cong L/A$ is a solvable c-ideal, as is any maximal subalgebra of $L$ not containing $A$. But every maximal subalgebra containing $A$ is a c-ideal, as in Theorem \ref{t:max}, and the result follows similarly.
\end{proof}
\bigskip

\begin{ex}Note that there are solvable Lie algebras with $2$-maximal subalgebras which are not $CAP$-subalgebras. For example, let $L = \R e_1 + \ Re_2 + \R e_3$ with $[e_1,e_3] = -[e_3,e_1] = e_2$, $[e_2,e_3]=-[e_3,e_2]=-e_1$ and all other products zero. Then $A = \R e_1 + \R e_2$ is a minimal ideal of $L$ and $U = \R e_1$ is a 2-maximal subalgebra of $L$. However, $A+U=A \neq U=0+U$ and $A \cap U=U \neq 0=0 \cap U$, so $U$ is not a $CAP$-subalgebra of $L$.  
\end{ex}

\begin{lemma}\label{l:2-max}  Let $L$ be a solvable Lie algebra. Then there is a $2$-maximal subalgebra $K$ of $L$ which is an ideal of $L$, and hence a $CAP$-subalgebra of $L$.
\end{lemma}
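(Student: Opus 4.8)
The plan is to argue by induction on $\dim L$, after first observing that $2$-maximal subalgebras exist only when $\dim L \geq 2$, so we may assume this (for $\dim L \leq 1$ there are no maximal subalgebras of a maximal subalgebra and the statement should be read as vacuous). The base case $\dim L = 2$ is immediate: any one-dimensional subalgebra $Fx$ has codimension $1$ and so is maximal in $L$, and $0$ is maximal in $Fx$; thus $K = 0$ is a $2$-maximal subalgebra, and it is trivially an ideal of $L$.

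For the inductive step I would take $\dim L = n \geq 3$ and choose a minimal ideal $A$ of $L$, which is necessarily abelian. I then split according to the codimension of $A$. If $\dim L/A \geq 2$, then $L/A$ is solvable of dimension strictly less than $n$ and at least $2$, so by the inductive hypothesis it has a $2$-maximal subalgebra $\bar K$ that is an ideal of $L/A$. The inclusion-preserving correspondence between subalgebras of $L$ containing $A$ and subalgebras of $L/A$ carries a chain exhibiting $\bar K$ as $2$-maximal back to a chain $K \subset M \subset L$ with $M$ maximal in $L$ and $K$ maximal in $M$; since $A \subseteq K$ and $K/A = \bar K$ is an ideal of $L/A$, the subalgebra $K$ is an ideal of $L$, and it is the required $2$-maximal ideal.

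The remaining case is when $A$ has codimension $1$, so that $L = Fx \oplus A$ for any $x \notin A$, with $A$ an abelian minimal ideal. Here the plan is to show that $Fx$ is itself maximal. The key observation is that, because $A$ is abelian, its $L$-submodules (for the adjoint action) are exactly the $\mathrm{ad}\,x$-invariant subspaces of $A$, so minimality of $A$ makes it irreducible as a module for $Fx$. If $S$ is a subalgebra with $Fx \subsetneq S$, then $S$ contains some $a \in A \setminus \{0\}$, whence $S \cap A$ is a nonzero $\mathrm{ad}\,x$-invariant subspace of $A$ and therefore equals $A$; thus $S \supseteq Fx + A = L$, proving $Fx$ maximal. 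Since $0$ is maximal in the one-dimensional algebra $Fx$, the subalgebra $K = 0$ is $2$-maximal in $L$ and is an ideal.

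In both cases the subalgebra $K$ produced is an ideal of $L$, and so it is a $CAP$-subalgebra by Lemma \ref{l:pre}(iii), which closes the induction. I expect the codimension-one case to be the main obstacle, and more precisely the recognition that one must pass to the quotient $L/A$ rather than hunt for a codimension-two ideal directly. A codimension-two ideal would automatically be $2$-maximal (any one-dimensional subalgebra of the two-dimensional quotient is maximal, with $0$ maximal inside it), but algebras of the form $Fx \ltimes (A_1 \oplus A_2)$, with $A_1$ and $A_2$ irreducible $\mathrm{ad}\,x$-modules of dimension two, have neither a codimension-two ideal nor a one-dimensional maximal subalgebra; for such an algebra no uniform non-inductive choice of $K$ succeeds, and it is precisely quotienting by a minimal ideal and recursing that makes the argument work.
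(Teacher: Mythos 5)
Your proof is correct, but it takes a genuinely different route from the paper's. You induct on $\dim L$ and quotient by a minimal ideal $A$: when $\dim L/A \geq 2$ you pull a $2$-maximal ideal back from $L/A$, and when $A$ has codimension one you show $Fx$ is maximal because $S \cap A$ is a nonzero $\mathrm{ad}\,x$-invariant (hence, $A$ being abelian, $L$-invariant) subspace of $A$ for any subalgebra $S \supsetneq Fx$. The paper instead gives a direct, induction-free construction pivoting on the derived subalgebra: if $\dim (L/L^2) > 1$, any codimension-two subspace containing $L^2$ is an ideal and is $2$-maximal; if $\dim (L/L^2) = 1$, write $L = L^2 + Fx$, choose an ideal $K$ with $L^2/K$ a chief factor, and show $K + Fx$ is maximal in $L$ --- the point being that for any subalgebra $U \supseteq K + Fx$, the subspace $U \cap L^2$ is an ideal of $L$ (using $L^{(2)} \subseteq K$), so minimality of $L^2/K$ forces $U \cap L^2 = K$ or $L^2$. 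Your hard case is exactly the paper's hard case specialized to the situation where $L^2$ is itself a minimal ideal; passing to a chief factor $L^2/K$ is what lets the paper handle a general codimension-one $L^2$ in one stroke and dispense with induction, while your inductive reduction buys a perhaps more transparent maximality argument (irreducibility of $A$ as an $Fx$-module). One correction to your closing remark: it is not true that ``no uniform non-inductive choice of $K$ succeeds'' for $L = Fx \ltimes (A_1 \oplus A_2)$; the paper's choice succeeds there, taking $K = A_1$ and producing the chain $A_1 \subset A_1 + Fx \subset L$, in which $A_1 + Fx$ is a maximal subalgebra of codimension two. What your example correctly shows is only that the two particular shortcuts you name --- a codimension-two ideal, or $K = 0$ below a one-dimensional maximal subalgebra --- can both fail; this does not affect the validity of your proof, which never relies on that remark.
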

\begin{proof} If $\dim (L/L^2) >1$ there is clearly a $2$-maximal subalgebra of $L$ containing $L^2$, so suppose that $\dim (L/L^2) =1$. Let $L = L^2 + Fx$, and let $L^2/K$ be a chief factor of $L$. Suppose that $K+Fx \subset U$, where $U$ is a subalgebra of $L$. Then $[U \cap L^2,L] = [U \cap L^2,L^2] + [U \cap L^2,Fx] \subseteq U \cap L^2$, since $L^{(2)} = [L^2,L^2] \subseteq K$. It follows that $L^2 \subseteq U \cap L^2$, whence $K+Fx$ is a maximal subalgebra and $K$ a $2$-maximal subalgebra of $L$.
\end{proof}
\bigskip

Finally we seek to characterise Lie algebras having a solvable $2$-maximal subalgebra which is a $CAP$-subalgebra of $L$.

\begin{theor} Let $L$ be a Lie algebra over a field $F$ which has characteristic zero. Then $L$  has a solvable $2$-maximal subalgebra $K$ of $L$ that is a $CAP$-subalgebra of $L$ if and only if either
\begin{itemize}
\item[(i)] $L$ is solvable, or 
\item[(ii)] $L=R \oplus S$, where $R$ is the (solvable) radical of $L$ (possibly $0$), $S$ is three-dimensional simple and  $\sqrt{F} \not \subseteq F$.
\end{itemize}
\end{theor}
\begin{proof} Suppose that $K$ is a solvable $2$-maximal subalgebra of $L$ that is a $CAP$-subalgebra of $L$, and that $R$ is the radical of $L$. Then $R+K$ is a solvable subalgebra of $L$. If $R+K=L$ we have case (i). So suppose that $R+K \neq L$. Let $L=R\oplus S$ where $S=S_1 \oplus \ldots \oplus S_n$, $S_i$ is a simple ideal of $S$ and put $J_i=R+S_1 \oplus \ldots \oplus S_i$ for $i=0, \ldots, n$ (where $J_0=R$). Suppose that $K \subseteq J_i$. Since $J_i/J_{i-1}$ is a chief factor of $L$ we have that $J_i=K+J_i=K+J_{i-1}$ or $K=K \cap J_i=K \cap J_{i-1}$. The former implies that $J_i/J_{i-1} \cong K/K \cap J_{i-1}$, which is impossible as $J_i/J_{i-1}$ is simple and $ K/K \cap J_{i-1}$ is solvable. It follows that $K \subseteq J_{i-1}$, from which $K \subseteq R$, since $K \subseteq J_n$.
\par

Let $M$ be a maximal subalgebra of $L$ containing $K$ as a maximal subalgebra. Suppose that $R \not \subset M$, so that $L=R+M$. Then $K \subseteq M \cap R \subseteq M$, so either $M \cap R=M$ or $M \cap R=K$. The former implies that $M \subseteq R$, which is impossible; the latter is also impossible, since $S \cong L/R \cong M/M \cap R$ and $M \cap R$ is not maximal in $M$. Hence $K \subseteq R \subset M$. It follows that $K=R$, from which (ii) easily follows.
\par

It is easy to see that algebras as in (i) and (ii) have a solvable $2$-maximal subalgebra which is a $CAP$-subalgebra.
\end{proof}

\end{document}